\begin{document}

\theoremstyle{plain}
\newtheorem{thm}{Theorem}[section]
\newtheorem{cor}[thm]{Corollary}
\newtheorem{con}[thm]{Conjecture}
\newtheorem{cla}[thm]{Claim}
\newtheorem{lm}[thm]{Lemma}
\newtheorem{prop}[thm]{Proposition}
\newtheorem{example}[thm]{Example}

\theoremstyle{definition}
\newtheorem{dfn}[thm]{Definition}
\newtheorem{alg}[thm]{Algorithm}
\newtheorem{prob}[thm]{Problem}
\newtheorem{rem}[thm]{Remark}

\renewcommand{\baselinestretch}{1.1}

\title{\bf On $r$-cross $t$-intersecting families for weak compositions}
\author{
Cheng Yeaw Ku
\thanks{ Department of Mathematics, National University of
Singapore, Singapore 117543. E-mail: matkcy@nus.edu.sg} \and Kok
Bin Wong \thanks{
Institute of Mathematical Sciences, University of Malaya, 50603
Kuala Lumpur, Malaysia. E-mail:
kbwong@um.edu.my.} } \maketitle

\begin{abstract}\noindent
Let $\mathbb N_0$ be the set of non-negative integers, and let $P(n,l)$ denote the set of all weak compositions of $n$ with $l$ parts, i.e., $P(n,l)=\{ (x_1,x_2,\dots, x_l)\in\mathbb N_0^l\ :\ x_1+x_2+\cdots+x_l=n\}$. For any element $\mathbf u=(u_1,u_2,\dots, u_l)\in P(n,l)$, denote its $i$th-coordinate by $\mathbf u(i)$, i.e., $\mathbf u(i)=u_i$. Let $l=\min(l_1,l_2,\dots, l_r)$. Families $\mathcal A_j\subseteq P(n_j,l_j)$ ($j=1,2,\dots, r$) are said to be $r$-cross $t$-intersecting if $\vert \{ i\in [l] \ :\ \mathbf u_1(i)=\mathbf u_2(i)=\cdots=\mathbf u_r(i)\} \vert\geq t$ for all $\mathbf u_j\in \mathcal A_j$. Suppose that $l\geq t+2$.   We prove that there exists a constant $n_0=n_0(l_1,l_2,\dots,l_r,t)$ depending only on $l_j$'s and $t$, such that for all $n_j\geq n_0$, if the families  $\mathcal A_j\subseteq P(n_j,l_j)$ ($j=1,2,\dots, r$) are $r$-cross $t$-intersecting, then
\begin{equation}
\prod_{j=1}^r \vert \mathcal{A}_j \vert\leq \prod_{j=1}^r {n_j+l_j-t-1 \choose l_j-t-1}.\notag
\end{equation}
Moreover, equality holds if and only if there is a $t$-set $T$ of $\{1,2,\dots,l\}$ such that $\mathcal{A}_j=\{\mathbf u\in P(n_j,l_j)\ :\ \mathbf u(i)=0\ \textnormal{for all}\ i\in T\}$ for $j=1,2,\dots, r$. 
\end{abstract}

\bigskip\noindent
{\sc keywords:}  cross-intersecting family, Erd{\H
o}s-Ko-Rado, weak compositions

\section{Introduction}

Let $[n]=\{1, \ldots, n\}$, and let ${[n] \choose k}$ denote the
family of all $k$-subsets of $[n]$. A family $\mathcal{A}$ of subsets of $[n]$ is $t$-{\em intersecting} if $|A \cap B| \ge t$ for all $A, B \in \mathcal{A}$. One of the most beautiful results in extremal combinatorics is
the Erd{\H o}s-Ko-Rado theorem.

\begin{thm}[Erd{\H o}s, Ko, and Rado \cite{EKR}, Frankl \cite{Frankl}, Wilson \cite{Wilson}]\label{EKR} Suppose $\mathcal{A} \subseteq {[n] \choose k}$ is $t$-intersecting and $n>2k-t$. Then for $n\geq (k-t+1)(t+1)$, we have
\begin{equation}
\vert \mathcal{A} \vert\leq {n-t \choose k-t}.\notag
\end{equation}
Moreover, if $n>(k-t+1)(t+1)$ then equality holds if and only if $\mathcal{A}=\{A\in {[n] \choose k}\ :\ T\subseteq A\}$ for some $t$-set $T$.
\end{thm}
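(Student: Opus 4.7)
The plan is to follow Frankl's shifting approach, which proves both the bound and the uniqueness in this regime. The central idea is to replace $\mathcal{A}$ by a combinatorially well-structured ``shifted'' family of the same size, and then exploit this structure to force a common $t$-set.

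For each pair $1 \leq i < j \leq n$, introduce the shift operator $S_{ij}$ acting on $\mathcal{A}\subseteq\binom{[n]}{k}$: replace $A\in\mathcal{A}$ by $(A\setminus\{j\})\cup\{i\}$ whenever $j\in A$, $i\notin A$, and the replacement is not already in $\mathcal{A}$; otherwise leave $A$ in place. Standard verification shows that $S_{ij}(\mathcal{A})\subseteq\binom{[n]}{k}$ has the same cardinality as $\mathcal{A}$ and remains $t$-intersecting. Iterating these shifts terminates because $\sum_{A\in\mathcal{A}}\sum_{i\in A} i$ strictly decreases, producing a \emph{shifted} family, i.e., one invariant under every $S_{ij}$.

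The decisive step is a structural lemma: under the hypothesis $n\geq(k-t+1)(t+1)$, every shifted $t$-intersecting $\mathcal{A}\subseteq\binom{[n]}{k}$ satisfies $[t]\subseteq A$ for all $A\in\mathcal{A}$, and hence $|\mathcal{A}|\leq\binom{n-t}{k-t}$. I would prove this by induction on $k-t$, the base case $k=t$ being immediate. For the inductive step, suppose some $A_0\in\mathcal{A}$ omits an element of $[t]$; by shiftedness one may assume $A_0\cap[t]=[t-1]$ with $A_0\setminus[t-1]$ as far left as possible, and then construct a companion $B\in\mathcal{A}$ with $|A_0\cap B|<t$, contradicting the $t$-intersection property. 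The hypothesis $n\geq(k-t+1)(t+1)$ enters precisely here, ensuring enough room in $[n]\setminus[t]$ to accommodate the disjoint $(k-t+1)$-blocks needed to produce the forbidden $B$ via repeated shifts.

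For equality, when $|\mathcal{A}|=\binom{n-t}{k-t}$ and $n>(k-t+1)(t+1)$, the structural lemma forces the shifted family to equal exactly $\{A\in\binom{[n]}{k}:[t]\subseteq A\}$. I would then unwind the shifting: analyzing when a single $S_{ij}$ can genuinely transform one star $\{A:T\subseteq A\}$ into another, one finds that the only possibility is $T$ being replaced by a shifted $t$-set, so reversing the sequence of shifts yields $\mathcal{A}=\{A:T\subseteq A\}$ for some original $t$-set $T$. The main obstacle is the structural lemma itself: the delicate case analysis needed to extract the contradiction using only shiftedness and the numerical bound is nontrivial, and the threshold $(k-t+1)(t+1)$ is tight because at the boundary the Frankl family $\{A:|A\cap[t+2]|\geq t+1\}$ also attains the bound, which is exactly why the uniqueness assertion must exclude the boundary case $n=(k-t+1)(t+1)$.
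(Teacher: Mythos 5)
First, note that the paper does not prove this statement at all: Theorem \ref{EKR} is quoted as background, with the proof attributed to the cited works of Erd{\H o}s--Ko--Rado, Frankl, and Wilson. So your proposal can only be judged against the literature, not against an argument in the paper.

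There is a genuine gap, and it sits exactly at your ``decisive step.'' The structural lemma you propose --- that for $n\geq(k-t+1)(t+1)$ \emph{every} shifted $t$-intersecting family $\mathcal{A}\subseteq\binom{[n]}{k}$ satisfies $[t]\subseteq A$ for all $A\in\mathcal{A}$ --- is false. The Frankl family $\mathcal{F}_1=\{A\in\binom{[n]}{k}:|A\cap[t+2]|\geq t+1\}$ is shifted and $t$-intersecting for every admissible $n$, yet it contains sets omitting the element $1$ (take any $A\supseteq\{2,\dots,t+2\}$). You cite this very family in your last sentence to explain the boundary case of the uniqueness assertion, apparently without noticing that it refutes your lemma outright: it exists (as a shifted $t$-intersecting family, merely of smaller size) for all $n>(k-t+1)(t+1)$ as well, not just at the threshold. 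Consequently your induction cannot succeed --- no contradiction can be extracted from the mere existence of $A_0\in\mathcal{A}$ with $A_0\cap[t]\neq[t]$, because legitimate shifted $t$-intersecting families with such members exist. What shifting can deliver is a \emph{size} bound for shifted families (via a more delicate argument, typically induction on $n$ combined with analysis of the families $\mathcal{F}_i=\{A:|A\cap[t+2i]|\geq t+i\}$), not containment in a star.

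There is also a historical/mathematical inaccuracy in the framing: Frankl's shifting method established the bound $\binom{n-t}{k-t}$ for $n\geq(k-t+1)(t+1)$ only for $t\geq 15$; the case of general $t$, together with the uniqueness statement for $n>(k-t+1)(t+1)$, required Wilson's algebraic (eigenvalue) proof --- which is precisely why the theorem carries the triple attribution in the paper. So even repaired, a pure shifting argument along your lines is not known to prove the full statement; you would either need to restrict to large $t$, or replace the core of the argument with Wilson's spectral method (or invoke the later Ahlswede--Khachatrian complete intersection theorem, which subsumes this statement).
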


In the celebrated paper \cite{AK}, Ahlswede and Khachatrian extended the Erd{\H o}s-Ko-Rado theorem by determining the structure of all  $t$-intersecting set systems of maximum size for all possible $n$ (see also \cite{Kee, Ku_Wong3, Toku} for some related results). There have been many recent results showing that a version of the Erd{\H o}s-Ko-Rado theorem holds for combinatorial objects other than set systems. For example, an analogue of the Erd{\H o}s-Ko-Rado theorem for the Hamming scheme is proved in \cite{Moon}. A complete solution for the $t$-intersection problem in the Hamming space is given in \cite{AK2}. Intersecting families of permutations were initiated by Deza and Frankl in \cite{DF}. Some recent work done on this problem and its variants can be found in \cite{B2, BH,CK, E, EFP, GM, KL, KW, LM, LW, WZ}. The investigation of the Erd{\H os}-Ko-Rado  property for graphs started in \cite{HT}, and gave rise to \cite{B, B3, HS, HST, HK, W}. The Erd{\H o}s-Ko-Rado type results also appear in vector spaces \cite{CP, FW},  set partitions \cite{KR, Ku_Wong, Ku_Wong2} and weak compositions \cite{Ku_Wong4}.

Let $\mathcal{A}_i\subseteq {[n] \choose k_i}$ for $i=1,2,\dots, r$. We say that the families $\mathcal{A}_1,\mathcal A_2,\dots, \mathcal A_r$ are $r$-\emph{cross} $t$-\emph{intersecting} if $\vert A_1\cap A_2\cap \cdots\cap A_r\vert\geq t$ holds for all $A_i\in\mathcal A_i$.  It has been shown by Frankl and Tokushige \cite{FT} that if $\mathcal{A}_1,\mathcal A_2,\dots, \mathcal A_r\subseteq {[n] \choose k}$ are $r$-cross 1-intersecting, then for $n\geq rk/(r-1)$,
\begin{equation}
\prod_{i=1}^r \vert \mathcal A_i\vert\leq \binom{n-1}{k-1}^r.\notag
\end{equation}

For different values of $k$'s, we have the following result. 

\begin{thm}[Bey \cite{Bey}, Matsumoto and Tokushige \cite{MT}, Pyber  \cite{Pyber}]\label{Bey} Let $\mathcal{A}_1\subseteq {[n] \choose k_1}$ and $\mathcal{A}_2\subseteq {[n] \choose k_2}$ be 2-cross 1-intersecting. If $k_1,k_2\leq n/2$, then
\begin{equation}
\vert \mathcal A_1\vert \vert \mathcal A_2\vert\leq \binom{n-1}{k_1-1} \binom{n-1}{k_2-1}.\notag
\end{equation}
Equality holds for $k_1+k_2<n$ if and only if $\mathcal A_1$ and $\mathcal A_2$ consist of all $k_1$-element resp.
$k_2$-element sets containing a fixed element.
\end{thm}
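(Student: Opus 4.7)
The plan is to combine the standard compression technique with Lovász's real-parameter form of the Kruskal-Katona shadow inequality. First, I would apply the shift operations $S_{ij}$ for $1 \le i < j \le n$ simultaneously to $\mathcal{A}_1$ and $\mathcal{A}_2$. Since shifting preserves the cardinalities and the 2-cross 1-intersecting property, we may assume both families are left-compressed throughout. Next, pass to complements: set $\mathcal{A}_1^c = \{[n] \setminus A : A \in \mathcal{A}_1\} \subseteq \binom{[n]}{n - k_1}$. The cross-intersecting condition is equivalent to the statement that no $B \in \mathcal{A}_2$ is a subset of any $C \in \mathcal{A}_1^c$. Since $k_1 + k_2 \le n$, the iterated lower shadow $\partial^{n - k_1 - k_2} \mathcal{A}_1^c \subseteq \binom{[n]}{k_2}$ is well-defined, and the condition rewrites as $\mathcal{A}_2 \cap \partial^{n - k_1 - k_2} \mathcal{A}_1^c = \emptyset$, yielding
\[
|\mathcal{A}_2| \;\le\; \binom{n}{k_2} - \bigl|\partial^{n - k_1 - k_2} \mathcal{A}_1^c\bigr|.
\]

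Writing $|\mathcal{A}_1^c| = \binom{x}{n - k_1}$ in the Lovász cascade for a real parameter $x \in [n - k_1, n]$, Kruskal-Katona gives $|\partial^{n - k_1 - k_2} \mathcal{A}_1^c| \ge \binom{x}{k_2}$, whence
\[
|\mathcal{A}_1| \cdot |\mathcal{A}_2| \;\le\; f(x) := \binom{x}{n - k_1} \left( \binom{n}{k_2} - \binom{x}{k_2} \right).
\]
It then suffices to prove $f(x) \le f(n - 1) = \binom{n-1}{k_1 - 1} \binom{n-1}{k_2 - 1}$ on $[n - k_1, n]$. The boundary values $f(n - k_1)$ and $f(n) = 0$ are easily dispatched, and a logarithmic-derivative calculation shows that the unique interior maximum occurs at $x = n - 1$ precisely under the hypothesis $k_1, k_2 \le n/2$ (equivalently $n - k_1 \ge k_2$). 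For the equality characterization when $k_1 + k_2 < n$: the identity $f(x) = f(n-1)$ forces $x = n-1$, so $|\mathcal{A}_1| = \binom{n-1}{k_1-1}$; equality in Kruskal-Katona then forces $\mathcal{A}_1^c$ to coincide (after unshifting) with a colex initial segment, giving $\mathcal{A}_1 = \{A \in \binom{[n]}{k_1} : i^* \in A\}$ for some $i^* \in [n]$; finally, the tight inclusion $\mathcal{A}_2 = \binom{[n]}{k_2} \setminus \partial^{n - k_1 - k_2}\mathcal{A}_1^c$ combined with the strict hypothesis $k_1 + k_2 < n$ pins $\mathcal{A}_2$ down to the parallel star $\{B \in \binom{[n]}{k_2} : i^* \in B\}$ through the same element $i^*$ (otherwise a disjoint pair $(A,B)$ with $i^* \in A$, $i^* \notin B$ would violate cross-intersection).

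The principal obstacle is the analytic maximization of $f$: verifying that the unique maximum on $[n - k_1, n]$ is attained at $x = n - 1$ under $k_1, k_2 \le n/2$, with the derivative analysis using $n - k_1 \ge k_2$ in an essential way (the inequality $f(x) \le f(n-1)$ genuinely fails when one $k_i > n/2$). A secondary technical point is the uniqueness analysis in Kruskal-Katona, where one must invoke the Füredi-Griggs characterization of extremal colex segments and transport the star structure back through the compression reduction to recover all extremal configurations.
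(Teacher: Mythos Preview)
The paper does not supply a proof of this theorem: it is quoted as background from Bey, Matsumoto--Tokushige, and Pyber, and the paper's own contribution is the weak-composition analogue (Theorems~\ref{thm_pre_main} and~\ref{thm_main}). There is therefore no in-paper argument to compare your proposal against.

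That said, your outline is essentially the classical proof found in the cited references, particularly Matsumoto--Tokushige: shift, pass to complements, bound $|\mathcal{A}_2|$ via the iterated shadow of $\mathcal{A}_1^c$, apply the Lov\'asz form of Kruskal--Katona, and maximize the resulting one-variable function $f(x)=\binom{x}{n-k_1}\bigl(\binom{n}{k_2}-\binom{x}{k_2}\bigr)$ over $[n-k_1,n]$. The verification that $f$ attains its maximum at $x=n-1$ under $k_1,k_2\le n/2$ is precisely the calculation carried out there. One point worth tightening in your equality discussion: the phrase ``after unshifting'' hides real work, since compression is not injective on families and a shifted star need not have come from a star. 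The standard remedy is either to analyze a single shift at a time, or---cleaner---to bypass shifting for the characterization and argue directly that a cross-intersecting pair with $|\mathcal{A}_1|=\binom{n-1}{k_1-1}$, $|\mathcal{A}_2|=\binom{n-1}{k_2-1}$ and $k_1+k_2<n$ must be a pair of stars on a common element. Also, invoking F\"uredi--Griggs is more than you need: since the forced value $x=n-1$ is an integer, the equality case of the Lov\'asz bound already pins the shifted $\mathcal{A}_1^c$ down to $\binom{S}{\,n-k_1}$ for some $(n-1)$-set $S$.
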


In this paper, we prove an analogue of Theorem \ref{Bey} for weak compositions with fixed number of parts. Let $\mathbb N_0$ be the set of non-negative integers, and let $P(n,l)$ denote the set of all weak compositions of $n$ with $l$ parts, i.e., $P(n,l)=\{ (x_1,x_2,\dots, x_l)\in\mathbb N_0^l\ :\ x_1+x_2+\cdots+x_l=n\}$. For any element $\mathbf u=(u_1,u_2,\dots, u_l)\in P(n,l)$, denote its $i$th-coordinate by $\mathbf u(i)$, i.e., $\mathbf u(i)=u_i$. Let $l=\min(l_1,l_2,\dots, l_r)$.  For any $\mathbf u_j\in P(n_j,l_j)$ ($j=1,2,\dots, r$), let $I(\mathbf u_1,\mathbf u_2,\dots, \mathbf u_r)=\{ i\in [l]\ :\ \mathbf u_1(i)=\mathbf u_2(i)=\cdots=\mathbf u_r(i)\}$. Families $\mathcal A_j\subseteq P(n_j,l_j)$ ($j=1,2,\dots, r$) are said to be $r$-\emph{cross} $t$-\emph{intersecting} if  $\vert I(\mathbf u_1,\mathbf u_2,\dots, \mathbf u_r)\vert\geq t$ for all $\mathbf u_j\in \mathcal A_j$. Our main result is the following.

\begin{thm}\label{thm_main} Given any positive integers $l_1,l_2,\dots, l_r$ and $t$ such that $l=\min(l_1,l_2,\dots, l_r)\geq t+2$, there exists a constant $n_0=n_0(l_1,l_2,\dots,l_r,t)$ depending only on $l_j$'s and $t$, such that for all $n_j\geq n_0$, if the families  $\mathcal A_j\subseteq P(n_j,l_j)$ \textnormal{($j=1,2,\dots, r$)} are $r$-cross $t$-intersecting, then
\begin{equation}
\prod_{j=1}^r \vert \mathcal{A}_j \vert\leq \prod_{j=1}^r {n_j+l_j-t-1 \choose l_j-t-1}.\notag
\end{equation}
Moreover, equality holds if and only if there is a $t$-set $T$ of $\{1,2,\dots,l\}$ such that $\mathcal{A}_j=\{\mathbf u\in P(n_j,l_j)\ :\ \mathbf u(i)=0\ \textnormal{for all}\ i\in T\}$ for $j=1,2,\dots, r$. 
\end{thm}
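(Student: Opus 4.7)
My plan exploits a fundamental quantitative fact about weak compositions: for any $t$-subset $T \subseteq [l]$ and value vector $\mathbf{v}: T \to \mathbb{N}_0$, the number of $\mathbf{u} \in P(n_j, l_j)$ with $\mathbf{u}(i) = \mathbf{v}(i)$ for all $i \in T$ equals $\binom{n_j - |\mathbf{v}| + l_j - t - 1}{l_j - t - 1}$, where $|\mathbf{v}| = \sum_{i \in T} \mathbf{v}(i)$. This count is maximized at $\mathbf{v} = \mathbf{0}$, and each unit increase in $|\mathbf{v}|$ shrinks it by a factor $\tfrac{l_j - t - 1}{n_j + l_j - t - 1 - |\mathbf{v}|}$. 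So for large $n_j$, shared-zero agreement vastly dominates shared-positive agreement, and the extremal configuration should be a ``zero-star'' on a common $t$-set of $[l]$.

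\emph{Step 1 (Shifting).} Adapting the shifting technique from \cite{Ku_Wong4} to the cross-intersecting setting, I apply for each pair $i < k$ in $[l]$ and each family $j$ a swap operator $\sigma_{ik}$ that moves zero entries to smaller indices. Standard arguments show $\sigma_{ik}$ preserves $|\mathcal{A}_j|$, and a cross-intersecting version of the shifting lemma shows that performing the shifts in a coordinated manner preserves the $r$-cross $t$-intersecting property. After iterating until stable, I may assume each $\mathcal{A}_j$ is compressed, so that the natural candidate for the extremal kernel is $T_0 = \{1, 2, \ldots, t\}$.

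\emph{Step 2 (Splitting and bounding).} Split $\mathcal{A}_j = \mathcal{A}_j^\star \sqcup \mathcal{B}_j$, where $\mathcal{A}_j^\star = \{\mathbf{u} \in \mathcal{A}_j : \mathbf{u}(i) = 0 \ \forall i \in T_0\}$ and $\mathcal{B}_j$ is the rest. Stars-and-bars gives $|\mathcal{A}_j^\star| \leq \binom{n_j + l_j - t - 1}{l_j - t - 1}$. For $\mathcal{B}_j$, I pick representatives $\mathbf{u}_k^\star \in \mathcal{A}_k^\star$ for each $k \neq j$ whose values on $[l] \setminus T_0$ are as concentrated as $l_k$ allows (the degenerate case $\mathcal{A}_k^\star = \emptyset$ is handled separately). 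Each $\mathbf{w} \in \mathcal{B}_j$ has $\mathbf{w}|_{T_0} \neq \mathbf{0}$, so the $r$-cross $t$-intersecting property applied to $(\mathbf{w}, \mathbf{u}_2^\star, \ldots, \mathbf{u}_r^\star)$ (in the appropriate slot) forces at least one of the $t$ required agreement coordinates to lie in $[l] \setminus T_0$, with value determined by the representatives. Summing over the finitely many ``anchor'' choices $(i^\star, v^\star) \in ([l] \setminus T_0) \times \mathbb{N}_0$ and using the quantitative bound from the opening paragraph yields $|\mathcal{B}_j| \leq C(l_j, t) \binom{n_j + l_j - t - 2}{l_j - t - 1}$, which is $O(1/n_j)$ times $|\mathcal{A}_j^\star|$ once $n_j$ is large in terms of $l_j$ and $t$. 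The hypothesis $l \geq t + 2$ is essential here, since it ensures $[l] \setminus T_0$ is nonempty and large enough for the anchor argument to produce a strictly smaller binomial coefficient.

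\emph{Step 3 (Product, equality, and obstacle).} Expand $\prod_j |\mathcal{A}_j| = \prod_j (|\mathcal{A}_j^\star| + |\mathcal{B}_j|)$: the main term is at most $\prod_j \binom{n_j + l_j - t - 1}{l_j - t - 1}$, and every remaining cross term carries at least one $|\mathcal{B}_j|$ factor, so is smaller by $O(1/n_j)$. Choosing $n_0(l_1, \ldots, l_r, t)$ sufficiently large makes every cross term strictly dominated, yielding the inequality; equality then forces $\mathcal{B}_j = \emptyset$ for all $j$ and each $\mathcal{A}_j^\star$ to saturate the star bound, and undoing the shifts gives the claimed characterization with a common $t$-set $T \subseteq [l]$. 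The main obstacle lies at the interface of Steps 1 and 2: verifying rigorously that the shifting operator preserves $r$-cross $t$-intersecting when the $l_j$'s differ (so the coordinates outside $[l]$ play no role in the condition), and obtaining uniform enough control on $C(l_j, t)$ across all $r$ families that a single $n_0$ depending only on $l_1, \ldots, l_r, t$ (and not on the $n_j$'s) closes the cross-term bookkeeping.
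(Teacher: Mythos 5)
Your proposal has two genuine gaps, and the first one is fatal to the architecture. The coordinate-swap compression in Step~1 does \emph{not} preserve the cross-intersecting property, even for $r=2$, $t=1$. Take $\mathcal A_1=\{(2,0,1)\}\subseteq P(3,3)$ and $\mathcal A_2=\{(2,3,0)\}\subseteq P(5,3)$; these are $2$-cross $1$-intersecting since they agree in coordinate $1$. The operator $\sigma_{12}$ (move a zero from position $2$ to position $1$) sends $(2,0,1)$ to $(0,2,1)$ but leaves $(2,3,0)$ untouched because its second entry is nonzero, and $(0,2,1)$ and $(2,3,0)$ agree in no coordinate. The standard proof that shifting preserves intersection relies on the untouched element's swapped image also being available for comparison, which fails here precisely because eligibility for the swap depends on where the zeros sit, and the two families need not have their zeros in compatible positions. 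Unlike the set-system or Hamming-space settings, there is no obvious monotone compression for weak compositions (you cannot lower an entry to $0$ without violating the sum constraint), and indeed the paper avoids compression entirely. So the reduction to the canonical kernel $T_0=\{1,\dots,t\}$ is unjustified, and the ``main obstacle'' you flag at the end is not a verification detail but a false step.

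The second gap is in Step~2's bound on $\vert\mathcal B_j\vert$. Anchoring a single representative $\mathbf u_k^\star\in\mathcal A_k^\star$ per family and arguing that each $\mathbf w\in\mathcal B_j$ must agree with them at some coordinate $i^\star\notin T_0$ does not give $\vert\mathcal B_j\vert=O(n_j^{-1})\binom{n_j+l_j-t-1}{l_j-t-1}$. If the common value of the representatives at $i^\star$ is $0$ (which it typically is, since the representatives are concentrated), the constraint $\mathbf w(i^\star)=0$ together with $t-1$ zeros inside $T_0$ only caps the count at $\binom{n_j+l_j-t-1}{l_j-t-1}$ again --- the same order as the main term, not smaller. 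A single representative simply does not pin down enough. This is exactly why the paper's Lemma~\ref{lm_main_independent} requires an \emph{independent set} of $l-t+1$ elements inside a large fiber $\mathcal A^*(x_1,\dots,x_t;y_1,\dots,y_t)$ (supplied by Theorem~\ref{thm_independent}): the forced agreements with $l-t+1$ pairwise-disjointly-agreeing elements overflow the $l-t$ available coordinates outside $\{x_1,\dots,x_t\}$, which is what actually forces $\mathcal A_2$ into the star $\{\mathbf u:\mathbf u(x_i)=y_i\}$. The paper also handles general $r$ not by a direct argument but by the identity $\bigl(\prod_i\vert\mathcal A_i\vert\bigr)^{r-1}=\prod_{i<j}\vert\mathcal A_i\vert\,\vert\mathcal A_j\vert$ applied to the pairwise case; your heuristic in the opening paragraph (zero-agreement dominates) is the right intuition, but the mechanism you propose to exploit it does not close.
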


\section{Case $r=2$}

In this section, we will prove the following theorem which is a special case of Theorem \ref{thm_main}.

\begin{thm}\label{thm_pre_main} Given any positive integers $l_1,l_2$ and $t$ such that $l=\min(l_1,l_2)\geq t+2$, there exists a constant $n_0=n_0(l_1,l_2,t)$ depending only on $l_1,l_2$ and $t$, such that for all $n_1,n_2\geq n_0$, if the families  $\mathcal A_1\subseteq P(n_1,l_1)$ and $\mathcal A_2\subseteq P(n_2,l_2)$  are $2$-cross $t$-intersecting, then
\begin{equation}
\vert \mathcal{A}_1 \vert\vert \mathcal{A}_2 \vert\leq  {n_1+l_1-t-1 \choose l_1-t-1}{n_2+l_2-t-1 \choose l_2-t-1}.\notag
\end{equation}
Moreover, equality holds if and only if there is a $t$-set $T$ of $\{1,2,\dots,l\}$ such that 
\begin{align}
\mathcal{A}_1 & =\{\mathbf u\in P(n_1,l_1)\ :\ \mathbf u(i)=0\ \textnormal{for all}\ i\in T\}\ \textnormal{and}\notag\\
\mathcal{A}_2 & =\{\mathbf u\in P(n_2,l_2)\ :\ \mathbf u(i)=0\ \textnormal{for all}\ i\in T\}.\notag
\end{align}
\end{thm}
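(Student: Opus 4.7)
The plan is to combine a neighborhood-counting bound for individual compositions with a kernel/stability case analysis on $\mathcal{A}_1$. For each $\mathbf{u}\in P(n_1,l_1)$, define
$$N_t(\mathbf{u})=\{\mathbf{v}\in P(n_2,l_2):|I(\mathbf{u},\mathbf{v})|\ge t\}.$$
Since $\mathcal{A}_2\subseteq N_t(\mathbf{u})$ for every $\mathbf{u}\in\mathcal{A}_1$, we have $|\mathcal{A}_2|\le\min_{\mathbf{u}\in\mathcal{A}_1}|N_t(\mathbf{u})|$. Classifying each $\mathbf{v}\in N_t(\mathbf{u})$ by some $t$-subset $I\subseteq[l]$ on which $\mathbf{v}$ agrees with $\mathbf{u}$ (the remaining $l_2-t$ coordinates of $\mathbf{v}$ then form an arbitrary weak composition of $n_2-\sum_{i\in I}\mathbf{u}(i)$) yields
$$|N_t(\mathbf{u})|\le\sum_{\substack{I\subseteq[l]\\ |I|=t}}\binom{n_2-\sum_{i\in I}\mathbf{u}(i)+l_2-t-1}{l_2-t-1}.$$
Each summand equals $\binom{n_2+l_2-t-1}{l_2-t-1}$ exactly when $\mathbf{u}$ vanishes on $I$, and is smaller by a multiplicative factor $1-\Omega(1/n_2)$ for every $I$ with $\sum_{i\in I}\mathbf{u}(i)\ge 1$.

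The argument then splits on whether $\mathcal{A}_1$ admits a common $t$-set of zero coordinates in $[l]$. In Case (a), there is a $t$-set $T\subseteq[l]$ with $\mathbf{u}(i)=0$ for every $\mathbf{u}\in\mathcal{A}_1$ and $i\in T$; then $\mathcal{A}_1\subseteq\mathcal{F}_T^{(1)}:=\{\mathbf{u}\in P(n_1,l_1):\mathbf{u}(i)=0\text{ for }i\in T\}$ immediately gives $|\mathcal{A}_1|\le\binom{n_1+l_1-t-1}{l_1-t-1}$. Selecting a ``generic'' $\mathbf{u}^\star\in\mathcal{A}_1$ with $\mathbf{u}^\star(i)>0$ for every $i\in[l]\setminus T$ (which exists unless $|\mathcal{A}_1|$ is already negligible, and uses $l\ge t+2$ so that $l-t\ge 2$ positive coordinates are available in $[l]\setminus T$), the sum in the neighborhood bound has $I=T$ as its unique maximal term while every other $I$ contributes an $O(n_2^{-1})$-sized correction; iterating with a second generic member then forces $\mathcal{A}_2\subseteq\mathcal{F}_T^{(2)}$, yielding both the bound and the equality characterization. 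In Case (b), no such common $T$ exists; by a Helly-type selection, which uses $l\ge t+2$ to guarantee enough room in $[l]$, $\mathcal{A}_1$ must contain some $\mathbf{u}^\star$ with fewer than $t$ zeros in $[l]$, or else a pair $(\mathbf{u}_1^\star,\mathbf{u}_2^\star)$ whose zero-sets in $[l]$ share no common $t$-subset. Applying the neighborhood bound to $\mathbf{u}^\star$, or to the intersection $N_t(\mathbf{u}_1^\star)\cap N_t(\mathbf{u}_2^\star)$, yields a genuine polynomial loss, forcing $|\mathcal{A}_1||\mathcal{A}_2|$ strictly below the extremal bound once $n_1,n_2\geq n_0$.

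The main obstacle will be Case (b). The combinatorial overhead in the neighborhood bound is $\binom{l}{t}$ summands, and each non-vanishing summand carries only an $O(1/n_2)$ improvement over the maximal one; this must be shown to beat the loss from bounding $|\mathcal{A}_1|$ by trivial estimates, which are up to $n_1^t$ larger than the extremal value. Controlling this trade-off requires applying a symmetric neighborhood bound on the $\mathcal{A}_1$ side and counting pairs $(\mathbf{u}_1^\star,\mathbf{u}_2^\star)$ of incompatible zero-patterns in $\mathcal{A}_1$; the resulting quantitative estimates drive the choice of $n_0=n_0(l_1,l_2,t)$, with the hypothesis $l\ge t+2$ entering both in the Helly-type selection and in ensuring sufficient slack for the generic-element arguments.
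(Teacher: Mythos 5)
Your neighborhood bound $|N_t(\mathbf u)|\le\sum_{|I|=t}\binom{n_2-\sum_{i\in I}\mathbf u(i)+l_2-t-1}{l_2-t-1}$ is correct, but the case analysis built on it does not close, and the obstacle you flag at the end is in fact fatal to the approach as described. The core difficulty is that this bound always carries the additive overhead of $\binom{l}{t}$ summands, each of which is at best $(1-\Omega(1/n_2))$ times the extremal value $\binom{n_2+l_2-t-1}{l_2-t-1}$ when it does not vanish. So in Case (b), applying the bound to a single $\mathbf u^\star$ with fewer than $t$ zeros, or to $N_t(\mathbf u_1^\star)\cap N_t(\mathbf u_2^\star)$ for a pair with incompatible zero-patterns, does \emph{not} yield a ``genuine polynomial loss'': it yields $|\mathcal A_2|\le\binom{l}{t}(1-\Omega(1/n_2))\binom{n_2+l_2-t-1}{l_2-t-1}$, a quantity still larger than the extremal value by a constant factor, while the trivial bound on $|\mathcal A_1|$ loses a factor of order $n_1^t$. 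A $1-O(1/n)$ per-term gain can never beat a polynomial loss on the other side. (For the pair version, note that $\mathbf u_1^\star$ and $\mathbf u_2^\star$ may agree on many $t$-sets $I$ with nonzero values there, and each such $I$ still contributes a near-maximal term.) Case (a) has the same defect: the terms $I\ne T$ are not $O(n_2^{-1})$-sized corrections unless $\sum_{i\in I}\mathbf u^\star(i)$ is comparable to $n_2$, which you cannot arrange since $\mathbf u^\star$ must be taken from the given family; and two ``generic'' members do not force $\mathcal A_2\subseteq\mathcal F_T^{(2)}$, since a $\mathbf v$ with $\mathbf v(i)\ne0$ for one $i\in T$ can still agree with each of two fixed members on $t-1$ coordinates of $T$ plus one accidental coordinate outside $T$.

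What is missing is a mechanism for collapsing the union of $\binom{l}{t}$ stars to a \emph{single} star, and this is exactly where the paper's argument differs. The paper fixes $\mathbf v\in\mathcal A_2$, writes $\mathcal A_1$ as the union of the $\binom{l}{t}$ restriction classes $\mathcal A_1(x_1,\dots,x_t;v_{x_1},\dots,v_{x_t})$, and splits on whether all classes are small (size at most $n_1^{1/2}\binom{n_1+l_1-t-2}{l_1-t-2}$) or some class is large. In the ``all small'' case the saving is a genuine factor $n_1^{-1/2}$, which does beat the $\binom{l}{t}^2$ overhead. In the ``large class'' case, the class is large enough that Theorem 2.2 (from the authors' earlier paper) extracts an \emph{independent} set of $l-t+1$ elements, pairwise disagreeing in every coordinate after the restriction; Lemma 2.3 then shows any $\mathbf v'$ that $t$-cross-intersects all of them but lies outside the star would need $l-t+1$ pairwise disjoint nonempty agreement sets inside only $l-t$ available coordinates, a contradiction. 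This pins each family into a single star of the form $\{\mathbf u:\mathbf u(x_i)=c_i\}$, after which the bound and the equality characterization (forcing $c_i=0$ and the two $t$-sets to coincide) follow. Your proposal has no analogue of this independent-set step, and without it, or some substitute that uses many pairwise-disagreeing members of $\mathcal A_1$ rather than one or two, the argument cannot reach the stated bound.
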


A family $\mathcal B\subseteq P(n,l)$ is said to be \emph{independent} if $I(\mathbf u,\mathbf v)=\varnothing$, i.e., $\vert I(\mathbf u,\mathbf v)\vert=0$, for all $\mathbf u,\mathbf v\in \mathcal B$ with $\mathbf u\neq \mathbf v$. We shall need the following theorem \cite[Theorem 2.3]{Ku_Wong4}

\begin{thm}\label{thm_independent} Let $m,n$ be positive integers satisfying $m\leq n$, and let $q,r,s$ be positive integers with $r,s\geq 2$ and $n\geq (2s)^{2^{r-2}q}+1$. If $\mathcal A\subseteq P(m,r)$ such that $\vert\mathcal A\vert\geq n^{\frac{1}{q}}\binom{n+r-2}{r-2}$,
then there is an independent set $\mathcal B\subseteq \mathcal A$ with $\vert \mathcal B\vert\geq s+1$.
\end{thm}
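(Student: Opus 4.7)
The plan is to prove the theorem by a direct greedy extraction, relying on a simple counting estimate for ``coordinate stars'' in $P(m,r)$. For any $\mathbf u\in P(m,r)$ and any coordinate $i\in[r]$, the set $\{\mathbf v\in P(m,r):\mathbf v(i)=\mathbf u(i)\}$ has size exactly $\binom{m-\mathbf u(i)+r-2}{r-2}\le\binom{m+r-2}{r-2}$, since such a $\mathbf v$ is determined by a weak composition of $m-\mathbf u(i)$ into the remaining $r-1$ parts. Taking a union bound over the $r$ coordinates then shows that at most $r\binom{m+r-2}{r-2}$ elements $\mathbf v\in P(m,r)$ satisfy $I(\mathbf u,\mathbf v)\neq\varnothing$.

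With this estimate, I would construct $\mathcal B = \{\mathbf u_1,\ldots,\mathbf u_{s+1}\} \subseteq \mathcal A$ greedily. At step $k+1$, having chosen pairwise independent elements $\mathbf u_1,\ldots,\mathbf u_k$, the number of elements of $\mathcal A$ blocked by the previous choices---namely those $\mathbf v\in\mathcal A$ with $I(\mathbf v,\mathbf u_j)\ne\varnothing$ for some $j\le k$---is at most $kr\binom{m+r-2}{r-2}$ by the coordinate-star bound. As long as this quantity is strictly less than $|\mathcal A|$, an unblocked element $\mathbf u_{k+1}\in\mathcal A$ exists, and $\{\mathbf u_1,\ldots,\mathbf u_{k+1}\}$ is still independent. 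Iterating up to $k=s$ produces the required independent set of size $s+1$.

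It then remains to verify the arithmetic: we need $|\mathcal A|>sr\binom{m+r-2}{r-2}$. Because $m\le n$, we have $\binom{m+r-2}{r-2}\le\binom{n+r-2}{r-2}$, so the hypothesis $|\mathcal A|\ge n^{1/q}\binom{n+r-2}{r-2}$ reduces the inequality to $n^{1/q}>sr$. Taking $q$-th roots in the assumption $n\ge(2s)^{2^{r-2}q}+1$ gives $n^{1/q}>(2s)^{2^{r-2}}$, and a short induction on $r\ge 2$ confirms $(2s)^{2^{r-2}}\ge sr$ for $s\ge 2$: the base $r=2$ gives equality with $sr=2s$, and squaring the inductive hypothesis, $(2s)^{2^{r-1}}=\bigl((2s)^{2^{r-2}}\bigr)^2\ge (sr)^2\ge s(r+1)$, handles the step.

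The only real obstacle is ensuring the crude union bound on blocked elements is tight enough to absorb both a factor of $r$ (coordinates) and a factor of $s$ (iterations). The hypothesis on $n$ is so generous that this rough estimate leaves ample slack, so no refined argument---for example a sharper star estimate exploiting $\sum_i\mathbf u(i)=m$, a sunflower-type extraction, or a recursive induction on $r$ that propagates an improved bucket structure---is required to close the proof.
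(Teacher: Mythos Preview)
Your argument is correct. The paper itself does not prove this statement: it is imported verbatim as \cite[Theorem 2.3]{Ku_Wong4} and used as a black box, so there is no in-paper proof to compare against line by line.

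That said, your route is almost certainly more elementary than the one in the cited source. The shape of the hypothesis $n\ge (2s)^{2^{r-2}q}+1$, with its doubly exponential dependence on $r$, strongly suggests an induction on $r$ in which the threshold squares at each step. Your greedy deletion argument bypasses that structure entirely: the only inequality you actually use is $n^{1/q}>sr$, and you recover the stated hypothesis only through the very loose check $(2s)^{2^{r-2}}\ge sr$. In effect you have proved a strictly stronger theorem (same conclusion under the much milder assumption $n>(sr)^q$), which is a nice bonus. The trade-off is that your method gives no finer information about the structure of $\mathcal A$, whereas an inductive argument of the type the bound hints at would typically also yield intermediate structural statements; for the application in the present paper, however, only the bare existence of the independent set is needed, so nothing is lost.
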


Let $\mathbf u=(u_1,u_2,\dots, u_l)\in P(n,l)$.  We define $R(i,\mathbf u)$ to be the element obtained from $\mathbf u$ by removing the $i$-th coordinate, i.e.,
\begin{equation}
R(i;\mathbf u)=(u_1,u_2,\dots, u_{i-1}, u_{i+1},\dots, u_l).\notag
\end{equation}
Inductively, if $x_1,x_2,\dots, x_t$ are distinct elements in $[l]$ with $x_1<x_2<\cdots<x_t$, we define
\begin{equation}
R(x_1,x_2,\dots, x_t;\mathbf u)=R(x_1,x_2,\dots, x_{t-1};R(x_t;\mathbf u)).\notag
\end{equation}
In other words, $R(x_1,x_2,\dots, x_t;\mathbf u)$ is the element obtained from $\mathbf u$ by removing the coordinates $x_{i}$, $ 1 \le i \le t$.

Let $\mathcal A\subseteq P(n,l)$. Let $x_1,x_2,\dots, x_t$ be distinct elements in $[l]$ with $x_1<x_2<\cdots<x_t$, and $y_1,y_2,\dots, y_t\in [n]$. We set
\begin{align}
\mathcal A(x_1,x_2,\dots, x_t;y_1,y_2,\dots, y_t) & =\{ \mathbf u\in \mathcal A\ :\ \mathbf u(x_i)=y_i\ \textnormal{for all $i$}\},\notag\\
\mathcal A^*(x_1,x_2,\dots, x_t;y_1,y_2,\dots, y_t) & =\{ R(x_1,x_2,\dots, x_t;\mathbf u)\ :\ \mathbf u\in \mathcal A(x_1,x_2,\dots, x_t;y_1,y_2,\dots, y_t)\}.\notag
\end{align}
Note that
\begin{equation}
\mathcal A^*(x_1,x_2,\dots, x_t;y_1,y_2,\dots, y_t)\subseteq P\left (n-\sum_{j=1}^t y_j,l-t\right ),\notag
\end{equation}
and
\begin{equation}
\vert \mathcal A^*(x_1,x_2,\dots, x_t;y_1,y_2,\dots, y_t)\vert=\vert \mathcal A(x_1,x_2,\dots, x_t;y_1,y_2,\dots, y_t)\vert.\notag
\end{equation}

\begin{lm}\label{lm_main_independent} Let $\mathcal A\subseteq P(n_1,l_1)$, $\mathbf v\in P(n_2,l_2)$ and  $l=\min(l_1,l_2)\geq t+2$. 
Suppose that $1\leq x_1<x_2<\cdots<x_t\leq l$. If $\mathcal A^*(x_1,\dots, x_t;y_1,\dots, y_t)$ has an independent set of size at least $l-t+1$, then either
\begin{itemize}
\item[\textnormal{(a)}] $\vert I(\mathbf v,\mathbf u)\vert \leq t-1$ for some $\mathbf u\in \mathcal A$, or
\item[\textnormal{(b)}] $\mathbf v\in \left \{ \mathbf u\in P(n_2,l_2)\ :\  \mathbf u(x_i)=y_i\ \textnormal{for all $i$}\right\}$.
\end{itemize}
\end{lm}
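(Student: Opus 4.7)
The plan is a short contrapositive / pigeonhole argument. Suppose both conclusions fail: for every $\mathbf u\in\mathcal A$ we have $|I(\mathbf v,\mathbf u)|\ge t$ (failure of (a)), and there is at least one index $i\in[t]$ with $\mathbf v(x_i)\ne y_i$ (failure of (b)). Our goal is to derive a contradiction with the assumed independent set in $\mathcal A^*(x_1,\dots,x_t;y_1,\dots,y_t)$.

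Set $X=\{x_1,\dots,x_t\}$ and $S=\{i\in[t]:\mathbf v(x_i)=y_i\}$. The failure of (b) gives $|S|\le t-1$. Fix the independent set $\{\mathbf w_1,\dots,\mathbf w_{l-t+1}\}\subseteq\mathcal A^*(x_1,\dots,x_t;y_1,\dots,y_t)$; by definition, each $\mathbf w_k$ arises from some $\mathbf u_k\in\mathcal A$ with $\mathbf u_k(x_i)=y_i$ for every $i\in[t]$, by deleting the coordinates in $X$. Note that $l\le l_1,l_2$, so every $j\in[l]$ is a legal coordinate for $\mathbf v$ and for each $\mathbf u_k$.

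For each $k$ set $J_k=I(\mathbf v,\mathbf u_k)\cap\bigl([l]\setminus X\bigr)$. Since $\mathbf u_k(x_i)=y_i$, the intersection $I(\mathbf v,\mathbf u_k)\cap X$ is exactly $\{x_i:i\in S\}$, so
\begin{equation}
|J_k|=|I(\mathbf v,\mathbf u_k)|-|S|\ \ge\ t-|S|\ \ge\ 1.\notag
\end{equation}
Thus $J_1,\dots,J_{l-t+1}$ are $l-t+1$ nonempty subsets of the $(l-t)$-element set $[l]\setminus X$. If they were pairwise disjoint the total size would be at least $l-t+1>l-t$, contradicting that they all sit in $[l]\setminus X$. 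Hence there exist indices $k\ne k'$ and $j\in[l]\setminus X$ with $j\in J_k\cap J_{k'}$, i.e.\ $\mathbf u_k(j)=\mathbf v(j)=\mathbf u_{k'}(j)$.

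To conclude, translate $j$ through the deletion map: the coordinate of $\mathbf u_k$ indexed by $j\in[l_1]\setminus X$ becomes a coordinate $j'\in[l_1-t]$ of $\mathbf w_k$, and the same $j'$ works for $\mathbf w_{k'}$ (the shift $j'=j-|\{x_i:x_i<j\}|$ depends only on $X$, not on $k$). Therefore $\mathbf w_k(j')=\mathbf w_{k'}(j')$, so $j'\in I(\mathbf w_k,\mathbf w_{k'})$, contradicting independence of $\{\mathbf w_1,\dots,\mathbf w_{l-t+1}\}$. The only mild subtlety is the bookkeeping around the index shift and the use of $l\le\min(l_1,l_2)$ to ensure that the chosen $j$ is a valid coordinate of both $\mathbf v$ and each $\mathbf u_k$; once that is set up, the hypothesis $l\ge t+2$ is used precisely to guarantee $l-t\ge 2$ so that the independent set of size $l-t+1\ge 3$ gives real pigeonhole slack.
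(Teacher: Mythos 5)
Your proof is correct and follows essentially the same argument as the paper's: failure of (b) forces each preimage $\mathbf u_k$ to agree with $\mathbf v$ at some coordinate outside $X$, and independence makes these agreement sets pairwise disjoint nonempty subsets of the $(l-t)$-set $[l]\setminus X$, which is impossible for $l-t+1$ of them. The only cosmetic difference is that you run the pigeonhole forward (two sets must meet, contradicting independence) while the paper runs it backward (disjointness forces the union to be too large); these are the same argument.
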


\begin{proof} Suppose that $\vert I(\mathbf v,\mathbf u)\vert \geq t$ for all $\mathbf u\in \mathcal A$ and $\mathbf v\notin \left \{ \mathbf u\in P(n_2,l_2)\ :\  \mathbf u(x_i)=y_i\ \textnormal{for all $i$}\right\}$. Let
\begin{equation}
\mathcal B=\{ R(x_1,x_2,\dots, x_t;\mathbf w_i)\ :\ i=1,2,\dots, l-t+1\},\notag
\end{equation}
be an independent set of size $l-t+1$ in $\mathcal A^*(x_1,\dots, x_t;y_1,\dots, y_t)$. 

Note that  $\mathbf w_i\in \mathcal A(x_1,\dots, x_t;y_1,\dots, y_t)$ for all $i$. Since $\mathbf v(x_j)\neq y_j$ for some $1\leq j\leq t$, we have 
\begin{equation}
\vert I(R(x_1,x_2,\dots, x_t;\mathbf v), R(x_1,x_2,\dots, x_t;\mathbf w_i))\vert\geq 1,\notag
\end{equation}
for all $i$. Let $\mathbf z=R(x_1,x_2,\dots, x_t;\mathbf v)$ and $\mathbf y_i=R(x_1,x_2,\dots, x_t;\mathbf w_i)$. Since $\mathcal B$ is independent,
\begin{equation}
I(\mathbf z,\mathbf y_i)\cap I(\mathbf z,\mathbf y_{i'})=\varnothing,\notag
\end{equation}
for $i\neq i'$. Therefore $\left\vert \bigcup_{i=1}^{l+t-1} I(\mathbf z,\mathbf y_i) \right\vert =\sum_{i=1}^{l-t+1} \vert I(\mathbf z,\mathbf y_i)\vert\geq \sum_{i=1}^{l-t+1} 1=l-t+1$. On the other hand, $\bigcup_{i=1}^{l+t-1} I(\mathbf z,\mathbf y_i)\subseteq [l] \setminus \{x_{j}: j \in [t]\}$, which is of size at most $l-t$, a contradiction. Hence, either part (a) or (b) of the lemma holds.
\end{proof}

\begin{proof}[Proof of Theorem \ref{thm_pre_main}] Let $\mathbf w=(w_1,w_2,\dots, w_{l_1})\in\mathcal A_1$  and $\mathbf v=(v_1,v_2,\dots, v_{l_2})\in\mathcal A_2$ be fixed. Since $\mathcal A_1$ and $\mathcal A_2$ are $2$-cross $t$-intersecting, we have
\begin{align}
\mathcal A_1 & =\bigcup_{\substack{\{x_1,x_2,\dots,x_t\}\subseteq [l],\\ x_1<x_2<\cdots<x_t}} \mathcal A_1(x_1,x_2,\dots, x_t;v_{x_1},v_{x_2},\dots, v_{x_t})\notag\\
\mathcal A_2 & =\bigcup_{\substack{\{x_1,x_2,\dots,x_t\}\subseteq [l],\\ x_1<x_2<\cdots<x_t}} \mathcal A_2(x_1,x_2,\dots, x_t;w_{x_1},w_{x_2},\dots, w_{x_t}).\notag
\end{align}

\vskip 0.5cm
\noindent
{\bf Case 1}. Suppose that
\begin{equation}
\vert \mathcal A_1(x_1,x_2,\dots, x_t;v_{x_1},v_{x_2},\dots, v_{x_t})\vert\leq n_1^{\frac{1}{2}}\binom{n_1+l_1-t-2}{l_1-t-2},\notag
\end{equation}
for all $\{x_1,x_2,\dots,x_t\}\subseteq [l]$ with $x_1<x_2<\cdots<x_t$. Then
\begin{align}
\vert \mathcal A_1\vert& \leq \binom{l}{t} n_1^{\frac{1}{2}}\binom{n_1+l_1-t-2}{l_1-t-2}\notag\\
&=\binom{l}{t} n_1^{\frac{1}{2}}\binom{n_1+l_1-t-1}{l_1-t-1}\frac{l_1-t-1}{n_1+l_1-t-1}\notag\\
& < \binom{l}{t} n_1^{-\frac{1}{2}}\binom{n_1+l_1-t-1}{l_1-t-1}(l_1-t-1).\notag
\end{align}

Now,
\begin{align}
\vert \mathcal A_2(x_1,x_2,\dots, x_t;w_{x_1},w_{x_2},\dots, w_{x_t})\vert & \leq \binom{n_2-\left(\sum_{i=1}^t w_{x_i}\right)+l_2-t-1}{l_2-t-1}\notag\\
&\leq \binom{n_2+l_2-t-1}{l_2-t-1},\notag
\end{align}
for all $\{x_1,x_2,\dots,x_t\}\subseteq [l]$ with $x_1<x_2<\cdots<x_t$. Therefore
\begin{equation}
\vert \mathcal A_2\vert \leq \binom{l}{t}\binom{n_2+l_2-t-1}{l_2-t-1},\notag
\end{equation}
and
\begin{equation}
\vert \mathcal{A}_1 \vert\vert \mathcal{A}_2 \vert\leq  \binom{l}{t}^2(l_1-t-1) n_1^{-\frac{1}{2}}\binom{n_1+l_1-t-1}{l_1-t-1}\binom{n_2+l_2-t-1}{l_2-t-1}.\notag
\end{equation}
Hence   $\vert \mathcal{A}_1 \vert\vert \mathcal{A}_2 \vert<\binom{n_1+l_1-t-1}{l_1-t-1}\binom{n_2+l_2-t-1}{l_2-t-1}$ if $n_1\geq \left((l_1-t-1)\binom{l}{t}^2\right)^2$.

\vskip 0.5cm
\noindent
{\bf Case 2}. Suppose that
\begin{equation}
\vert \mathcal A_2(x_1,x_2,\dots, x_t;w_{x_1},w_{x_2},\dots, w_{x_t})\vert\leq n_2^{\frac{1}{2}}\binom{n_2+l_2-t-2}{l_2-t-2},\notag
\end{equation}
for all $\{x_1,x_2,\dots,x_t\}\subseteq [l]$ with $x_1<x_2<\cdots<x_t$. This case is similar to Case 1. We will obtain $\vert \mathcal{A}_1 \vert\vert \mathcal{A}_2 \vert<\binom{n_1+l_1-t-1}{l_1-t-1}\binom{n_2+l_2-t-1}{l_2-t-1}$ if $n_2\geq \left((l_2-t-1)\binom{l}{t}^2\right)^2$.

\vskip 0.5cm
\noindent
{\bf Case 3}. Suppose that
\begin{align}
\vert \mathcal A_1(x_1,x_2,\dots, x_t;v_{x_1},v_{x_2},\dots, v_{x_t})\vert & \geq  n_1^{\frac{1}{2}}\binom{n_1+l_1-t-2}{l_1-t-2}\notag
\end{align}
for some $t$-set $\{x_1,x_2,\dots,x_t\}\subseteq [l]$ with $x_1<x_2<\cdots<x_t$, and 
\begin{align}
\vert \mathcal A_2(y_1,y_2,\dots, y_t;w_{y_1},w_{y_2},\dots, w_{y_t})\vert & \geq  n_2^{\frac{1}{2}}\binom{n_2+l_2-t-2}{l_2-t-2}\notag
\end{align}
for some $t$-set $\{y_1,y_2,\dots,y_t\}\subseteq [l]$ with $y_1<y_2<\cdots<y_t$.
Since
\begin{equation}
\vert \mathcal A_1^*(x_1,x_2,\dots, x_t;v_{x_1},v_{x_2},\dots, v_{x_t})\vert=\vert \mathcal A_1(x_1,x_2,\dots, x_t;v_{x_1},v_{x_2},\dots, v_{x_t})\vert,\notag
\end{equation}
and
\begin{equation}
\mathcal A_1^*(x_1,x_2,\dots, x_t;v_{x_1},v_{x_2},\dots, v_{x_t})\subseteq P\left (n-\sum_{j=1}^t v_{x_j},l_1-t\right ),\notag
\end{equation}
it follows from Theorem \ref{thm_independent} that $\mathcal A_1^*(x_1,x_2,\dots, x_t;v_{x_1},v_{x_2},\dots, v_{x_t})$ has an independent set of size at least $l-t+1$, provided
$n_1\geq (2(l-t))^{2^{l_1-t-1}}+1$. Similarly, $\mathcal A_2^*(y_1,y_2,\dots, y_t;w_{y_1},w_{y_2},\dots, w_{y_t})$ has an independent set of size at least $l-t+1$, provided
$n_2\geq (2(l-t))^{2^{l_2-t-1}}+1$.

It follows from Lemma \ref{lm_main_independent} that
\begin{align}
\mathcal A_1& \subseteq\left \{ \mathbf u\in P(n_1,l_1)\ :\  \mathbf u(y_i)=w_{y_i}\ \textnormal{for $i=1,2,\dots,t$}\right\}\ \textnormal{and}\notag\\
\mathcal A_2& \subseteq \left \{ \mathbf u\in P(n_2,l_2)\ :\  \mathbf u(x_i)=v_{x_i}\ \textnormal{for $i=1,2,\dots,t$}\right\}.\notag
\end{align}
This implies that
\begin{align}
\vert \mathcal A_1\vert &\leq \binom{n_1-\sum_{i=1}^t w_{y_i}+l_1-t-1}{l_1-t-1}\notag\\
 &\leq \binom{n_1+l_1-t-1}{l_1-t-1}.\notag
\end{align}
Clearly, equality holds if and only if $w_{y_i}=0$ for all $i=1,2,\dots, t$. Similarly, $\vert \mathcal A_2\vert \leq \binom{n_2+l_2-t-1}{l_2-t-1}$ and equality holds if and only if $v_{x_i}=0$ for all $i=1,2,\dots, t$. So $\vert \mathcal{A}_1 \vert\vert \mathcal{A}_2 \vert\leq \binom{n_1+l_1-t-1}{l_1-t-1}\binom{n_2+l_2-t-1}{l_2-t-1}$ and equality holds if and only if 
\begin{align}
\mathcal A_1& =\left \{ \mathbf u\in P(n_1,l_1)\ :\  \mathbf u(y_i)=0\ \textnormal{for $i=1,2,\dots,t$}\right\}\ \textnormal{and}\notag\\
\mathcal A_2& = \left \{ \mathbf u\in P(n_2,l_2)\ :\  \mathbf u(x_i)=0\ \textnormal{for $i=1,2,\dots,t$}\right\}.\notag
\end{align}
Now 
\begin{equation}
\mathcal A_1 =\mathcal A_1(y_1,y_2,\dots, y_t;\overbrace{0,0,\dots, 0}^t),\notag
\end{equation}
and 
\begin{equation}
\vert \mathcal A_1^*(y_1,y_2,\dots, y_t;\overbrace{0,0,\dots, 0}^t)\vert=\vert \mathcal A_1\vert=\binom{n_1+l_1-t-1}{l_1-t-1}>n_1^{\frac{1}{2}}\binom{n_1+l_1-t-2}{l_1-t-2}.\notag
\end{equation}
By Theorem \ref{thm_independent} and Lemma \ref{lm_main_independent}, 
\begin{equation}
\mathcal A_2 \subseteq \left \{ \mathbf u\in P(n_2,l_2)\ :\  \mathbf u(y_i)=0\ \textnormal{for $i=1,2,\dots,t$}\right\}.\notag
\end{equation}
Since $\vert \mathcal A_2\vert=\binom{n_2+l_2-t-1}{l_2-t-1}$, we must have $x_i=y_i$ for all $i$.

This completes the proof of Theorem \ref{thm_pre_main}.
\end{proof}

\section{Proof of Theorem \ref{thm_main}}

Note that $\mathcal A_i, \mathcal A_j$ are 2-cross $t$-intersecting for $i\neq j$. By Theorem \ref{thm_pre_main},
\begin{equation}\label{eq1}
\vert \mathcal A_i\vert \vert \mathcal A_j \vert\leq \binom{n_i+l_i-t-1}{l_i-t-1}\binom{n_j+l_j-t-1}{l_j-t-1}.\notag
\end{equation}

Therefore
\begin{align}
\left (\prod_{i=1}^r \vert \mathcal A_i\vert \right)^{r-1} & =\prod_{1\leq i< j\leq r}\vert \mathcal A_i\vert  \vert \mathcal A_j \vert\notag\\
& \leq \prod_{1\leq i< j\leq r}\binom{n_i+l_i-t-1}{l_i-t-1}\binom{n_j+l_j-t-1}{l_j-t-1}\notag\\
& = \left (\prod_{i=1}^r \binom{n_i+l_i-t-1}{l_i-t-1}\right)^{r-1},\notag
\end{align}
and 
\begin{equation}
\prod_{i=1}^r \vert \mathcal{A}_i \vert\leq \prod_{i=1}^r {n_i+l_i-t-1 \choose l_i-t-1}.\notag
\end{equation}
Suppose equality holds. Then
\begin{equation}
\vert \mathcal A_i\vert \vert \mathcal A_j \vert=\binom{n_i+l_i-t-1}{l_i-t-1}\binom{n_j+l_j-t-1}{l_j-t-1},\notag
\end{equation}
and by Theorem \ref{thm_pre_main}, there is a  $t$-set $T_{ij}$ of $\{1,2,\dots,l\}$ such that 
\begin{align}
\mathcal A_i& =\{\mathbf u\in P(n_i,l_i)\ :\ \mathbf u(x)=0\ \textnormal{for all}\ x\in T_{ij}\}\ \textnormal{and}\notag\\
\mathcal A_j& = \{\mathbf u\in P(n_j,l_j)\ :\ \mathbf u(x)=0\ \textnormal{for all}\ x\in T_{ij}\}.\notag
\end{align}

Suppose that $T_{ij}\neq T_{ij'}$ for some $j\neq j'$. Then
\begin{equation}
\mathcal A_i =\{\mathbf u\in P(n_i,l_i)\ :\ \mathbf u(x)=0\ \textnormal{for all}\ x\in T_{ij}\cup T_{ij'}\},\notag
\end{equation}
and $\vert \mathcal A_i\vert \leq \binom{n_i+l_i-t-2}{l_i-t-2}$. So,
\begin{equation}
\binom{n_i+l_i-t-1}{l_i-t-1}\binom{n_j+l_j-t-1}{l_j-t-1}=\vert \mathcal A_i\vert \vert \mathcal A_j \vert\leq \binom{n_i+l_i-t-2}{l_i-t-2}\binom{n_j+l_j-t-1}{l_j-t-1},\notag
\end{equation}
a contradiction. Hence $T=T_{ij}=T_{ij'}$ for all $j\neq j'$.  This completes the proof of Theorem \ref{thm_main}. \qed

\section*{Acknowledgments}	

This project is supported by the Advanced Fundamental Research Cluster, University of Malaya (UMRG RG238/12AFR).

\end{document}